\theoremstyle{definition}
\newtheorem{exmp}{Example}[section]
\def\line#1{\hbox to \hsize{#1\hfill}}
\def\line#1{\hbox to \hsize{#1\hfill}}
\begin{document}
\theoremstyle{plain}
\newtheorem{Thm}{Theorem}
\newtheorem{Cor}{Corollary}
\newtheorem{Con}{Conjecture}
\newtheorem{Main}{Main Theorem}
\newtheorem{Lem}{Lemma}
\newtheorem{Prop}{Proposition}
\def\R{\mathbb{R}}
\def\F{\mathbb{F}}
\def\Re{{\frak R\frak e}}
\def\Im{{\frak I\frak m}}
\def\S{\mathbb{S}}
\def\H{\mathbb{H}}
\def\L{\mathbb{L}}
\theoremstyle{definition}
\newtheorem{Def}{Definition}
\newtheorem{Note}{Note}

\newtheorem{example}{\indent\sc Example}

\theoremstyle{remark}
\newtheorem{notation}{Notation}
\renewcommand{\thenotation}{}

\errorcontextlines=0
\numberwithin{equation}{section}
\renewcommand{\rm}{\normalshape}%

\title[Null hypersurfaces in 4-manifolds]%
   {Null hypersurfaces in 4-manifolds endowed with a product structure}
\author{Nikos Georgiou}
\address{}
\email{}

\keywords{}
\subjclass{Primary 53C42; Secondary 53C50}
\date{20 December 2022}

\address{Nikos Georgiou\\
 Department of Computing and Mathematics \\
 South East Technological University (SETU) \\
Waterford \\
Ireland.}


\maketitle

\let\thefootnote\relax\footnote{}
\begin{abstract}
In a 4-manifold, the composition of a Riemannian Einstein metric with an almost paracomplex structure that is isometric and parallel, defines a neutral metric that is conformally flat and scalar flat. In this paper, we study hypersurfaces that are null with respect to this neutral metric and in particular we study their geometric properties with respect to the Einstein metric. Firstly, we show that all totally geodesic null hypersurfaces are scalar flat and their existence implies that the Einstein metric in the ambient manifold must be Ricci-flat. Then, we find a necessary condition for the existence of null hypersurface with equal non-trivial principal curvatures and finally, we give a necessary condition on the ambient scalar curvature, for the existence  of null (non-minimal) hypersurfaces that are of constant mean curvature. 
\end{abstract}
\section{Introduction}


\noindent Einstein Riemannian 4-manifolds $(M,g)$ with a parallel, isometric, almost paracomplex structure $P$ exhibit many interesting properties through the metric $g'$ defined by $g'=g(P.,.)$. In particular, the metric $g'$ is of neutral signature, locally conformally flat, scalar flat and shares the same Levi-Civita connection and Ricci tensor with $g$ \cite{GG}. 

\noindent Recently, F. Urbano in \cite{urbano} and later, D. Gao, H. Ma and Z. Yao in \cite{GMY}, have studied hypersurfaces in ${\mathbb S}^2\times {\mathbb S}^2$ and ${\mathbb H}^2\times {\mathbb H}^2$, respectively, endowed with the Einstein product metric. In particular, they used two complex structures $J_1,J_2$ on those manifolds to study isoparametric and homogeneous hypersurfaces by considering the product $P=J_1J_2$, which is an (almost) paracomplex structure that is parallel and isometric with respect to the product metric.

\noindent The space ${\mathbb L}(M^3)$ of oriented geodesics in the 3-dimensional non-flat real space form $M^3$ is a 4-dimensional manifold admiting an Einstein metric and a paracomplex structure $P$ that is isometric and parallel. Therefore, there exists a neutral, locally conformally flat and scalar flat metric sharing the same Levi-Civita connection and Ricci tensor with the Einstein metric (see \cite{AGK} and \cite{An4} for more details). The paracomplex structure $P$ has been explicitly described by H. Anciaux in \cite{An4} in a similar manner as in the product of surfaces. More precisely, H. Anciaux constructed two (para) complex structures $J_1$ and $J_2$, so that $J_1J_2=J_2J_1$ and then considered the product $P=J_1J_2$. This paracomplex structure was used in \cite{GG1}, to study a class of hypersurfaces in ${\mathbb L}(M^3)$, called \emph{tangential congruences}, that are sets of all tangent oriented geodesics in a given surface in $M$. Particularly, it was shown that tangential congruences are null with respect to the neutral metric and if, additionally, they are tangent to a convex surface then they admit a contact structure. The space ${\mathbb L}({\mathbb R}^3)$ of oriented lines in ${\mathbb R}^3$ is also a 4-dimensional manifold admiting a neutral metric $G$ that is locally conformally flat, scalar flat and is invariant under the Euclidean motions \cite{AGK, GK1}. M. Salvai showed that $G$ is the only metric that is invariant of the group action of the Eucliean 3-space. The null hypersurfaces in ${\mathbb L}({\mathbb R}^3)$ play an important role in the study of the ultrahyperbolic equation 
\begin{equation}\label{e:ultrahyperbolic}
u_{x_1x_1}+u_{x_2x_2}-u_{x_3x_3}-u_{x_4x_4}=0,
\end{equation}
where, $u=u(x_1,x_2,x_3,x_4)$ is a real function in ${\mathbb R}^4$ (see \cite{CG}). Specifically, let ${\mathbb R}^{2,2}=({\mathbb R}^4, g_0:=dx_1^2+dx_2^2-dx_3^2-dx_4^2)$, and $f:{\mathbb L}({\mathbb R}^3)\rightarrow {\mathbb R}^{2,2}$ be the conformal map defined according to $G=\omega^2f^{\ast}g_0$, where $\omega$ is a strictly positive function. A function $v$ is harmonic with respect to $G$, i.e., $\Delta_{G}u=0$, if and only if $\omega\cdot v\circ f$ is a solution of the ultrahyperbolic equation (\ref{e:ultrahyperbolic}) \cite{CG}. This implies solving the ultrahyperbolic equation is equivalent to solving the Laplace equation with respect to the neutral metric $G$. Consider now the problem
\[
\Delta_G v=0,
\]
where the function $v$ on ${\mathbb L}({\mathbb R}^3)$, is given on the null hypersurface $H=\{\gamma\in {\mathbb L}({\mathbb R}^3)|\,\, \gamma\parallel P_0\}$, with $P_0$ is a fixed plane in ${\mathbb R}^3$. In \cite{BG} B. Guilfoyle presented an inversion formula describing $v$ on ${\mathbb L}({\mathbb R}^3)$, using Fritz John's inversion formula (cf. \cite{J}). It is then natural to ask whether an arbitrary real function defined on a null hypersurface can be uniquely extended to a harmonic function  on ${\mathbb L}(M^3)$ with respect of the neutral metric, for any 3-dimensional real space form $M^3$.

\noindent In this article, we study null hypersurfaces with respect to the neutral metric $g_-$ of an Einstein 4-dimensional manifold $(M,g_+)$ endowed with an almost paracomplex structure $P$ that is parallel and isometric, so that $g_-=g(P_+.,.)$.

\vspace{0.1in}

\noindent Our first result deals with totally geodesic null hypersurfaces. In particular, we have the following:

\vspace{0.1in}

\noindent {\bf Theorem 1.} \emph{Every totally geodesic null hypersurface is scalar flat. If $M$ admits a totally geodesic null hypersurface then $(M,g_+)$ is Ricci-flat.}

\vspace{0.1in}

\noindent Let $N$ be the unit normal vector field, with respect to the Riemannian Einstein metric $g_+$, along a null hypersurface. The principal curvature corresponding to the principal direction $PN$, is zero. The other two principal curvatures are called \emph{non-trivial}. The next result provide a necessary condition for the existence of null hypersurfaces with equal non-trivial principal curvatures.

\vspace{0.1in}

\noindent {\bf Theorem 2.} \emph{Suppose $(M,g)$ has nonnegative scalar curvature and $\Sigma$ is a null hypersurface with equal non-trivial principal curvatures. Then, $g$ is Ricci-flat and $\Sigma$ is totally geodesic.}

\vspace{0.1in}

\noindent Finally, we study (non-minimal) null hypersurfaces having constant mean curvature (CMC). In particular, we prove the following:

\vspace{0.1in}

\noindent {\bf Theorem 3.} \emph{Let $\Sigma$ be a CMC, non-minimal null hypersurface in $(M,g)$. Then, all principal curvatures and the scalar curvature of $\Sigma$ are constant. Furthermore, the scalar curvature of $g$ is given by
\[
\bar R=-8\lambda_1\lambda_2,
\]
where $\lambda_1,\lambda_2$, denote the non-trivial principal curvatures of $\Sigma$.
}

\vspace{0.2in}

\noindent {\bf Acknowledgements.} The author would like to thank T. Lyons for his helpful and valuable suggestions and comments.

\vspace{0.2in}


\vspace{0.1in}

\section{Preliminaries} \label{SectionOne}

\noindent Let $(M,g)$ be an Einstein 4-manifold endowed with a product structure $P$ (specifically a type (1,1) tensor field with $P^2=\mbox{Id}$) such that: 
\begin{enumerate}
\item The eigenbundles corresponding to the eigenvalues $+1$ and $-1$, have equal rank.
\item $P$ is an isometry, that is, $$g(P.,P.)=g(.,.).$$
\item $P$ is parallel, that is, $$\overline\nabla P=0,$$
where, $\overline\nabla$ is the Levi-Civita connection of $g$. 
\end{enumerate} 
In other words, $P$ is an almost paracomplex structure that is parallel and isometric.

\noindent Define the metric $g_-$ by, $$g_-=g(P.,.),$$ and denote $g$ by $g_+$.
Then, $g_-$ is of neutral signature, locally conformally flat and scalar flat \cite{GG}. Also, both metrics $g_+$ and $g_-$ share the same Levi-Civita connection $\overline\nabla$ (see \cite{An4} for further details).

\noindent Let $\Sigma^3$ be an oriented hypersurface of $M$ and consider the normal bundles:  
\[
\mathcal{N}_\pm(\Sigma)=\{\xi\in TM\,|\, g_\pm(X,\xi)=0,\, \forall \xi\in T\Sigma\}.
\]
Let $N_\pm$ be the unit normal vector of $\Sigma$ with respect to $g_\pm$, so that 
\[
g_\pm(N_\pm,N_\pm)=\epsilon_\pm\in\{-1,0,1\},
\]
(note that $\epsilon_+=1$) and define the functions $C_{\pm}$ on $\Sigma$ according to
\[
C_+=g_+(PN_+,N_+)=g_-(N_+,N_+),
\]
and
\[
C_-=g_-(PN_-,N_-)=g_+(N_-,N_-).
\]
Consider the tangential vector field along $M$
\[
X_\pm=PN_\pm-\epsilon_\pm C_\pm N_\pm.
\]
For $\xi\in \mathcal{N}_\pm(M)$, we have
\begin{eqnarray}
g_\pm(\nabla C_\pm,\xi)&=&\nabla_\xi C_\pm\nonumber \\
&=&2\,g_\pm(\nabla_\xi N_\pm,X_\pm)\nonumber\\
&=&g_\pm(\xi,-2A_\pm X_\pm),\nonumber
\end{eqnarray}
showing that
\begin{equation}\label{e:shapeoperator}
\nabla C_\pm=-2A_\pm X_\pm,
\end{equation}
where $A_\pm$ denotes the shape operator of $\Sigma$ immersed in $(M,g_\pm)$.

\noindent Also,
\begin{equation}\label{e:shapeoperator1}
\nabla_\xi X_\pm=-P^\bot A_\pm\xi+\epsilon_\pm C_\pm A_\pm\xi,
\end{equation}
where $P^\bot $ stands for the orthogonal projection of $P$ on $\Sigma$.
Let $R_\pm, H_\pm$ and $\sigma_\pm$ be respectively the scalar curvature, the mean curvature and the second fundamental form of $\Sigma$ immersed in $(M,g_\pm)$.

\begin{Prop}
The Hessian of $C_\pm$ is:
\begin{equation}\label{e:hessian}
\nabla^2C_\pm(u,v)=-2(\nabla_u\sigma_\pm)(X_\pm,v)-2\epsilon_\pm C_\pm g_\pm(A_\pm u,A_\pm v)+2g_\pm (PA_\pm u,A_\pm v).
\end{equation}
\end{Prop}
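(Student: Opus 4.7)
The plan is to differentiate the gradient identity (\ref{e:shapeoperator}) and use the evolution equation (\ref{e:shapeoperator1}) for the tangent field $X_\pm$. Concretely, for tangent vectors $u,v$ on $\Sigma$ I will compute
\[
\nabla^2 C_\pm(u,v) = g_\pm(\nabla_u \nabla C_\pm, v) = -2\, g_\pm(\nabla_u (A_\pm X_\pm), v),
\]
and then split the right-hand side using the product rule for the shape operator.

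First, I would write $\nabla_u(A_\pm X_\pm) = (\nabla_u A_\pm)X_\pm + A_\pm(\nabla_u X_\pm)$. Pairing the first term with $v$ produces exactly $(\nabla_u \sigma_\pm)(X_\pm,v)$, because $\sigma_\pm(Y,Z)=g_\pm(A_\pm Y,Z)$ and the metric connection commutes with the metric. This accounts for the first summand $-2(\nabla_u\sigma_\pm)(X_\pm,v)$ in (\ref{e:hessian}).

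Next I would substitute (\ref{e:shapeoperator1}), namely $\nabla_u X_\pm = -P^\bot A_\pm u + \epsilon_\pm C_\pm A_\pm u$, into the remaining term $-2 g_\pm(A_\pm(\nabla_u X_\pm), v)$. Using self-adjointness of $A_\pm$ with respect to $g_\pm$, I move $A_\pm$ across to obtain
\[
-2 g_\pm(A_\pm \nabla_u X_\pm, v) = 2\, g_\pm(P^\bot A_\pm u, A_\pm v) - 2\epsilon_\pm C_\pm g_\pm(A_\pm u, A_\pm v).
\]
Finally, since $A_\pm v$ is tangent to $\Sigma$, the normal component of $P A_\pm u$ contributes nothing, hence $g_\pm(P^\bot A_\pm u, A_\pm v) = g_\pm(PA_\pm u, A_\pm v)$, which yields the last two summands of (\ref{e:hessian}).

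The computation is essentially routine once (\ref{e:shapeoperator}) and (\ref{e:shapeoperator1}) are in hand; the only point that requires a little care is the bookkeeping on the first term, making sure that $(\nabla_u A_\pm)X_\pm$ genuinely gives rise to $(\nabla_u \sigma_\pm)(X_\pm,v)$ rather than some mixture involving normal components. Since $X_\pm$ is tangent to $\Sigma$ (it is by construction the tangential component of $PN_\pm$) and $v$ is tangent, this identification is clean, and no additional curvature identity is needed.
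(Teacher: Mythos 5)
Your proof is correct and follows essentially the same route as the paper: both differentiate the identity $\nabla C_\pm=-2A_\pm X_\pm$ from (\ref{e:shapeoperator}) and substitute (\ref{e:shapeoperator1}) for $\nabla_u X_\pm$. The only difference is organizational — you identify the first term directly as $g_\pm\bigl((\nabla_u A_\pm)X_\pm,v\bigr)=(\nabla_u\sigma_\pm)(X_\pm,v)$, whereas the paper expands $(\nabla_u\sigma_\pm)(X_\pm,v)$ from the definition of the covariant derivative of a tensor and cancels the matching terms.
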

\begin{proof}
In this proof we omit the subscript $\pm$, unless is necessary. 

\noindent Using (\ref{e:shapeoperator}) on the tangential vector fields $u,v$, we have
\begin{eqnarray}
\nabla^2C(u,v)
&=& g(\nabla_u(-2AX),v)\nonumber\\
&=& -2g(\nabla_uAX,v)\nonumber\\
&=& -2\nabla_u(g(AX,v))+2g(AX,\nabla_uv)\nonumber\\
&=& -2\nabla_u(g(X,Av))+2g(AX,\nabla_uv)\nonumber\\
&=& -2g(\nabla_uX,Av)-2g(X,\nabla_uAv)+2g(AX,\nabla_uv)\nonumber\\
&=& -2g(\epsilon CAu-P^TAu,Av)-2g(X,\nabla_uAv)+2g(AX,\nabla_uv)\nonumber\\
&=& -2\epsilon Cg(Au,Av)+2G(PAu,Av)-2g(X,\nabla_uAv)+2g(AX,\nabla_uv)\nonumber
\end{eqnarray}
Note that $\sigma(u,v)=g(Au,v)$ and for simplicity use $\nabla_u\sigma(X,v)$ to denote $(\nabla_u\sigma)(X,v)$. We now have
\begin{eqnarray}
\nabla_u\sigma(X,v)&=& u(\sigma(X,v))-\sigma(\nabla_uX,v)-\sigma(X,\nabla_uv)\nonumber\\
&=& u(G(X,Av))-g(\nabla_uX,Av)-g(AX,\nabla_uv)\nonumber\\
&=& g(\nabla_uX,Av)+g(X,\nabla_uAv)-g(\nabla_uX,Av)-g(AX,\nabla_uv)\nonumber\\
&=&g(X,\nabla_uAv)-g(AX,\nabla_uv),\nonumber
\end{eqnarray}
and therefore,
\begin{eqnarray}
\nabla^2C(u,v)
&=&-2\epsilon Cg(Au,Av)+2g(PAu,Av)-2\nabla_u\sigma(X,v).\nonumber
\end{eqnarray}
\end{proof}

\begin{Prop}
If $\Delta$ denotes the Laplacian of the metric $g_+$ induced on the hypersurface $\Sigma$, then
\[
\Delta C_+=-6\,g_+(X_+,\nabla H_+)-2C_+|\sigma_+|^2+2\,\emph{Tr}(P^TA_+^2),
\]
where $H_+$ denotes the mean curvature and $A_+$ is the shape operator.
\end{Prop}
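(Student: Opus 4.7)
The natural plan is to trace the Hessian identity (\ref{e:hessian}) over an orthonormal frame of $T\Sigma$ with respect to $g_+$, and then apply the Codazzi equation of the ambient manifold (together with the Einstein condition) to rewrite the resulting divergence-type term in terms of $\nabla H_+$.

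More concretely: fix a point $p\in\Sigma$ and choose a $g_+$-orthonormal frame $\{e_1,e_2,e_3\}$ of $T\Sigma$ geodesic at $p$. Specializing (\ref{e:hessian}) to the $+$ case (where $\epsilon_+=1$) and taking the trace gives
\[
\Delta C_+=-2\sum_i (\nabla_{e_i}\sigma_+)(X_+,e_i)-2C_+|\sigma_+|^2+2\,\mathrm{Tr}(P^\top A_+^2),
\]
so the last two terms match the statement immediately. The only genuine work is to show that the first sum equals $3\,g_+(X_+,\nabla H_+)$.

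For this I would invoke the Codazzi equation of $(M,g_+)$,
\[
(\nabla_{e_i}\sigma_+)(X_+,e_i)=(\nabla_{X_+}\sigma_+)(e_i,e_i)+\overline R(e_i,X_+,e_i,N_+),
\]
and sum on $i$. The first piece sums to $X_+(\mathrm{Tr}\,\sigma_+)=3\,g_+(X_+,\nabla H_+)$ since $\mathrm{Tr}\,\sigma_+=3H_+$. The second piece sums to $\overline{\mathrm{Ric}}(X_+,N_+)$, because the missing term $\overline R(N_+,X_+,N_+,N_+)$ vanishes by the antisymmetry of $\overline R$ in its first two slots. Now the Einstein condition forces $\overline{\mathrm{Ric}}=\tfrac{\overline R}{4}\,g_+$, and the key observation is that
\[
g_+(X_+,N_+)=g_+(PN_+,N_+)-C_+g_+(N_+,N_+)=C_+-C_+=0,
\]
by the very definition of $X_+$ and $C_+$. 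Hence $\overline{\mathrm{Ric}}(X_+,N_+)=0$ and the Codazzi sum reduces to $3\,g_+(X_+,\nabla H_+)$, yielding the desired formula.

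The main potential obstacle is organising the Codazzi-plus-Einstein step cleanly and verifying the sign/convention in the ambient curvature term; the rest is a direct trace. Once the orthogonality $g_+(X_+,N_+)=0$ is noted, the Einstein hypothesis kills the Ricci contribution with no further assumption, and the identity falls out.
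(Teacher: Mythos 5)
Your proposal is correct and follows essentially the same route as the paper: trace the Hessian identity, then use the Codazzi equation together with the Einstein condition and the orthogonality $g_+(X_+,N_+)=0$ to reduce the divergence term to $3\,g_+(X_+,\nabla H_+)$. The paper phrases the Ricci step by adding the vanishing term $g(R(N_+,X_+)N_+,N_+)$ to complete the trace, exactly as you observe.
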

\begin{proof}
In the proof we omit the subscript $+$ unless is necessary. 
The Codazzi-Mainardi equation for $\Sigma$ is
\[
g(R(u,v)z,N)=(\nabla_u\sigma)(v,z)-(\nabla_v\sigma)(u,z).
\]
Consider the orthonormal frame $(e_1,e_2,e_3)$ of $\Sigma$, where $Ae_i=\lambda_i e_i$. The fact that $g$ is Einstein gives,
\begin{eqnarray}
\sum_{i=1}^3\left((\nabla_{e_i}\sigma)(X,e_i)-(\nabla_X\sigma)(e_i,e_i)\right)&=&\sum_{i=1}^3g(R(e_i,X)e_i,N)\nonumber\\
&=&\sum_{i=1}^3g(R(e_i,X)e_i,N)+g(R(N,X)N,N)\nonumber\\
&=&\overline{\mbox{Ric}}(X,N)\nonumber\\
&=&\textstyle{\frac{\bar R}{4}}\,g(X,N)\nonumber\\
&=&0.\nonumber
\end{eqnarray}
Thus, 
\begin{eqnarray}
\sum_{i=1}^3(\nabla_{e_i}\sigma)(X,e_i)&=&\sum_{i=1}^3(\nabla_X\sigma)(e_i,e_i)\nonumber\\
&=&\sum_{i=1}^3\nabla_X(\sigma(e_i,e_i))-\sigma(\nabla_Xe_i,e_i)-\sigma(e_i,\nabla_Xe_i)\nonumber\\
&=&3\nabla_XH-2\sum_{i=1}^3g(\nabla_Xe_i,Ae_i)\nonumber\\
&=&3g(X,\nabla H)-2\sum_{i=1}^3\lambda_i g(\nabla_Xe_i,e_i)\nonumber\\
&=&3g(X,\nabla H).\nonumber
\end{eqnarray}
Using the fact that $\epsilon_+=1$, we have
\begin{eqnarray}
\Delta C&=&\sum_{i=1}^3\nabla^2C(e_i,e_i)\nonumber\\
&=&-2\sum_{i=1}^3\left((\nabla_{e_i}\sigma)(X,e_i)+ Cg(Ae_i,Ae_i)-g(PAe_i,Ae_i)\right)  \nonumber\\
&=&-6g(X,\nabla H)-2\sum_{i=1}^3\left(\lambda^2_iC-\lambda^2_ig(Pe_i,e_i)\right), \nonumber
\end{eqnarray}
and this completes the proof.
\end{proof}

\noindent Let $R,R_{ij}, R_{ijkl}$ be respectively the scalar curvature, the Ricci tensor and the curvature tensor of the metric $g_+$ induced on $\Sigma$ and let $\bar R,\bar R_{ij}, \bar R_{ijkl}$ be respectively the scalar curvature, the Ricci tensor and the curvature of the ambient metric $g_+$.

\noindent Using the Gauss equation we get (for simplicity, we omit the subscript $+$):
\begin{eqnarray}
R&=& g^{ij}R_{ij}\nonumber \\
&=& g^{ij}g^{kl}(\bar R_{kilj}+\sigma_{ij}\sigma_{kl}-\sigma_{il}\sigma_{kj})\nonumber \\
&=& g^{ij}g^{kl}\bar R_{kilj}+9H^2-|\sigma|^2.\nonumber
\end{eqnarray}
The fact the $g_+$ is Einstein implies,
\begin{eqnarray}
g^{ij}g^{kl}\bar R_{kilj}&=&
g^{ij}\bar R_{ij} -g^{NN}\overline{\mbox{Ric}}(NN) \nonumber \\
&=&(\bar R_+-g^{NN}\overline{\mbox{Ric}}(NN))-g^{NN}\overline{\mbox{Ric}}(NN)\nonumber \\
&=&\bar R_+-2\overline{\mbox{Ric}}(NN)\nonumber \\
&=&\bar R_+-2(\bar R_+/4)g_+(N,N)\nonumber \\
&=&\bar R_+/2.\nonumber 
\end{eqnarray}
We then have
\begin{equation}\label{e:scalar curvature}
R_+=\textstyle{\frac{1}{2}}\bar R_++9H_+^2-|\sigma_+|^2.
\end{equation}
The Gauss equation for the metric $g_-$ induced on $\Sigma$, gives
\[
\bar R_-=R_-+2\overline{\mbox{Ric}}_-(N_-,N_-)+||\sigma_-||^2-9H_-^2,
\]
and using the fact that $g_-$ is scalar flat, we have that $\bar R_-=0$. 
Therefore,
\[
R_-=-2\overline{\mbox{Ric}}_-(N_-,N_-)-||\sigma_-||^2+9H_-^2.
\]
On the other hand,
\begin{eqnarray}
\overline{\mbox{Ric}}_-(N_-,N_-)&=&\overline{\mbox{Ric}}_+(N_-,N_-)\nonumber \\
&=&\textstyle{\frac{\bar R_+}{4}}C_-,\nonumber 
\end{eqnarray}
and thus,
\[
R_-=-\textstyle{\frac{\bar R_+}{2}}C_--||\sigma_-||^2+9H_-^2.
\]
We then have,
\begin{Prop}
Assume $(M,g_{\pm})$ has positive (resp. negative) scalar curvature $\bar R_{\pm}$. The following two statements hold:
\begin{enumerate}
\item If $\Sigma$ is a totally geodesic hypersurface in $(M,g_+)$, then it has positive (resp. negative) scalar curvature.
\item If $\Sigma$ is a totally geodesic hypersurface $(M,g_-)$, then it has negative (resp. positive) scalar curvature.
\end{enumerate}
\end{Prop}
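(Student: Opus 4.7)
The proposal is that both parts follow essentially by specialising the two scalar-curvature identities derived immediately before the statement, namely
\[
R_+=\tfrac{1}{2}\bar R_++9H_+^2-|\sigma_+|^2,\qquad R_-=-\tfrac{\bar R_+}{2}\,C_--|\sigma_-|^2+9H_-^2,
\]
together with a sign observation about $C_-$. I will not need to redo any curvature computation; the work is already captured in those two displayed equations.

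For part (1), suppose $\Sigma$ is totally geodesic in $(M,g_+)$. Then $\sigma_+\equiv 0$, and in particular $H_+\equiv 0$, so the first identity collapses to $R_+=\tfrac12\bar R_+$. Since $\bar R_+$ is constant in sign on $M$ by hypothesis, the conclusion about the sign of $R_+$ is immediate.

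For part (2), suppose $\Sigma$ is totally geodesic in $(M,g_-)$. Then $\sigma_-\equiv 0$ and $H_-\equiv 0$, so the second identity reduces to $R_-=-\tfrac{\bar R_+}{2}\,C_-$. The key observation now is that $C_-=g_+(N_-,N_-)>0$ everywhere on $\Sigma$: indeed $g_+$ is Riemannian and $N_-$ is a nowhere-vanishing vector field (it is a $g_-$-unit normal of the non-null hypersurface $\Sigma$, hence nonzero as an element of $TM$). Note that the hypothesis "$(M,g_\pm)$ has positive (resp. negative) scalar curvature $\bar R_\pm$" has to be read as a hypothesis on $\bar R_+$ only, since $g_-$ is scalar flat; it is $\bar R_+$ that enters the formula. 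Thus $R_-$ and $\bar R_+$ have opposite signs, giving the stated conclusion.

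There really is no hard step — the only mild subtlety, and the one point I would state carefully, is the strict positivity of $C_-$, which is what flips the sign between the two cases. Everything else is direct substitution into identities established earlier in the section.
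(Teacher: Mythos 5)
Your proof is correct and is exactly the argument the paper intends: the Proposition is stated immediately after the two identities $R_+=\tfrac12\bar R_++9H_+^2-|\sigma_+|^2$ and $R_-=-\tfrac{\bar R_+}{2}C_--\|\sigma_-\|^2+9H_-^2$, and follows by setting the second fundamental form to zero and using $C_->0$. Your explicit justification of $C_->0$ and your remark that the hypothesis really concerns $\bar R_+$ (since $g_-$ is scalar flat) are both sound and, if anything, make the argument more complete than the paper's, which leaves the deduction implicit.
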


\vspace{0.2in}


\section{Null Hypersurfaces}

\begin{Def}
A \emph{null hypersurface} in a pseudo-Riemannian manifold is an oriented hypersurface where the induced metric is indefinite and the normal vector field is null.
\end{Def}
\noindent In this section, when we refer to a null hypersurface we simply mean a hypersurface that is null with respect to the neutral metric of $g_-$. 
\begin{Prop}
Suppose $\Sigma$ is an oriented hypersurface of $M$. Then, the following statements hold:
\begin{enumerate}
\item $|C_+|\leq 1,\quad\mbox{and}\quad C_->0$.
\item $C_+=0$, if and only if $\Sigma$ is a null hypersurface.
\item If $\Sigma$ is a null hypersurface then, $PN_+$ is a principal direction with zero corresponding principal curvature.
\end{enumerate}
\end{Prop}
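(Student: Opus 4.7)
The plan is to prove the three items in order, using the algebraic identity $P^2=\mathrm{Id}$ together with the isometry property of $P$ as the central tool, and then applying equation (\ref{e:shapeoperator}) for item (3).

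For item (1), the bound $|C_+|\le 1$ will follow immediately from the Cauchy--Schwarz inequality applied to the Riemannian inner product $g_+$: since $P$ is a $g_+$-isometry, $PN_+$ is a $g_+$-unit vector, so $|C_+|=|g_+(PN_+,N_+)|\le |PN_+|_{g_+}|N_+|_{g_+}=1$. The positivity of $C_-$ is even simpler: $C_-=g_+(N_-,N_-)$ and $N_-$ is a nonzero vector, so $C_->0$ since $g_+$ is positive definite (regardless of whether $N_-$ happens to be $g_-$-null).

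For item (2), the key observation I would record first is that $PN_+$ always lies in $\mathcal{N}_-(\Sigma)$. Indeed, for every $w\in T\Sigma$,
\[
g_-(PN_+,w)=g_+(P^2N_+,w)=g_+(N_+,w)=0,
\]
using $P^2=\mathrm{Id}$ and the $g_+$-normality of $N_+$. Since the $g_-$-normal bundle is one-dimensional, $\mathcal{N}_-(\Sigma)=\mathrm{span}(PN_+)$. The hypersurface $\Sigma$ is $g_-$-null precisely when this spanning vector is $g_-$-null, and
\[
g_-(PN_+,PN_+)=g_+(P^2N_+,PN_+)=g_+(N_+,PN_+)=C_+.
\]
So $\Sigma$ is null if and only if $C_+=0$.

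For item (3), assume $C_+\equiv 0$ on $\Sigma$. First, $g_+(PN_+,N_+)=C_+=0$ shows that $PN_+$ is $g_+$-tangent to $\Sigma$, which is consistent with the well known fact that the null normal of a null hypersurface is also tangential. Second, from the definition $X_+=PN_+-\epsilon_+ C_+N_+$ we get $X_+=PN_+$ on $\Sigma$. Third, since $C_+$ vanishes identically on $\Sigma$, its tangential gradient vanishes, so equation (\ref{e:shapeoperator}) reduces to $A_+(PN_+)=0$, meaning $PN_+$ is a principal direction with zero principal curvature.

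The only delicate point I anticipate is clarifying in (3) that $\nabla C_+$ in (\ref{e:shapeoperator}) denotes the gradient of $C_+$ along $\Sigma$, so that its vanishing is a genuine consequence of $C_+\equiv 0$ on $\Sigma$; other than that, the argument is a direct unfolding of the defining relations together with the identity $P^2=\mathrm{Id}$.
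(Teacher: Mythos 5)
Your proof is correct, and items (1) and (3) follow the paper's own argument almost verbatim (the paper gets $|C_+|\le 1$ from $g_+(X_+,X_+)=1-C_+^2\ge 0$ rather than Cauchy--Schwarz, but these are the same computation in disguise; item (3) is identical: $X_+=PN_+$ and $0=\nabla C_+=-2A_+X_+$). The only structural difference is in item (2). The paper proves the two implications separately, routing everything through the vector $N_-$: from $C_+=0$ it deduces $PN_+\in T\Sigma$, hence $g_+(N_+,N_-)=0$, hence $N_-\in T\Sigma$, hence $g_-(N_-,N_-)=0$; and conversely it uses $g_+(PN_-,T\Sigma)=0$ to write $PN_-=\lambda N_+$ and pull the nullity of $N_-$ back to $C_+$. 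You instead observe once and for all that $g_-(PN_+,\cdot)=g_+(N_+,\cdot)$ forces $\mathcal{N}_-(\Sigma)=\mathrm{span}(PN_+)$, and then read off both directions from the single identity $g_-(PN_+,PN_+)=C_+$. Both arguments rest on the same underlying fact --- that $P$ exchanges the $g_+$- and $g_-$-normal directions --- but your packaging is shorter, handles the equivalence symmetrically, and avoids having to manipulate the (non-normalizable, merely ``non-zero'') vector field $N_-$ in the null case. Your closing remark about interpreting $\nabla C_+$ as the tangential gradient in $\nabla C_+=-2A_+X_+$ is exactly the right point to flag, and it is how the paper uses that identity as well.
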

\begin{proof}
\begin{enumerate}
\item
It is not hard to confirm that $|X_+|=1-(C_+)^2\geq 0$. Also,
\[
C_-=g_+(N_-,N_-)>0.
\]
\item 
Assuming $C_+=0$, we have that $g_+(PN_+,N_+)=0$ and using the fact that $g_+$ is Riemannian then, $PN_+\in T\Sigma$. This implies, 
\[
g_-(PN_+,N_-)=0,
\]
or,
\[
g_+(N_+,N_-)=0.
\]
But this tells us that $N_-\in T\Sigma$, and therefore
\[
g_-(N_-,N_-)=0,
\]
which means that $\Sigma$ is null. 


\noindent Conversely, assume that $\Sigma$ is null and consider the non-zero normal vector field $N_-$. Then, $g_-(N_-,N_-)=0$. On the other hand, $g_-(N_-,T\Sigma)=0$, which means $g_+(PN_-,T\Sigma)=0$. Therefore, $PN_-=\lambda N_+$, where $\lambda\neq 0$, since $N_-$ is non-zero vector field. Thus,

\begin{eqnarray}
C_+&=& g_-(N_+,N_+)\nonumber\\
&=& \lambda^{-2}g_-(N_-,N_-)\nonumber\\
&=&0,\nonumber
\end{eqnarray}
and this completes the proof.

\item Since $\Sigma$ is null then $C_+=0$ and therefore
\[
X_+=PN_+-C_+N_+=PN_+\in T\Sigma.
\]
Note that 
\[
0=\nabla C_+=-2A_+X_+,
\]
which implies 
\[
A_+PN_+=0,
\]
and therefore $PN_+$ is a principal direction.
\end{enumerate}
\end{proof}

\vspace{0.2in}

\noindent For a null hypersurface $\Sigma$, we study the geometric properties of the metric $g_+$ induced on $\Sigma$ and for this reason we omit the $+$ subscripts unless it is necessary.

\vspace{0.2in}

\subsection{Examples of null hypersurfaces}

\begin{exmp}
We now describe the almost paracomplex structure defined in the spaces of oriented geodesics of 3-manifolds of constant curvature using their (para) K\"ahler structures (see \cite{AGK} \cite{GG1} \cite{GK1} \cite{salvai1} for more details). 

\noindent For $p\in\{0,1,2,3\}$, consider the (pseudo-) Euclidean 4-space ${\mathbb R}_p^4:=({\mathbb R}^4,\left<.,.\right>_p)$, where
\[
\left<.,.\right>_p=-\sum_{i=1}^p dX_i^2+\sum_{i=p+1}^4dX_i^2,
\]
and let ${\mathbb S}_p^{3}$ be the quadric
\[
{\mathbb S}_p^{3}=\{x\in {\mathbb R}^4|\; \left<x,x\right>_p=1\}.
\]
The quadric ${\mathbb S}_0^{3}$ is the 3-sphere ${\mathbb S}^{3}$, ${\mathbb S}_3^{3}\cap \{x\in {\mathbb R}^4|\, X_4>0\}$ is anti-isometric to the hyperbolic 3-space ${\mathbb H}^{3}$, ${\mathbb S}_1^{3}$ is the de Sitter 3-space $d {\mathbb S}^{3}$ and, ${\mathbb S}_2^{3}$ is anti-isometric to the anti-de Sitter 3-space $Ad {\mathbb S}^{3}$.

\noindent Let $g_p$ be the metric $\left<.,.\right>_p$ induced on ${\mathbb S}_p^{3}$ by the inclusion map. The space of oriented geodesics in ${\mathbb S}_p^{3}$ is a 4-dimensional manifold and is identified with the following Grasmmannian spaces of oriented planes on ${\mathbb R}_p^4$:
\[
{\mathbb L}^{\pm}({\mathbb S}_p^{3})=\{x\wedge y\in \Lambda^2({\mathbb R}_p^4)|\; y\in T_x{\mathbb S}_p^{3},\; g_p(y,y)=\pm 1\}.
\]

\noindent Let $\iota:{\mathbb L}^{\pm}({\mathbb S}_p^{3})\rightarrow \Lambda^2({\mathbb R}_p^4)$, be the inclusion map and $\left<\left<,\right>\right>_p$ be the flat metric in the 6-manifold $\Lambda^2({\mathbb R}_p^4)$, defined by
\[
\left<\left<u_1\wedge v_1,u_2\wedge v_2\right>\right>_p:=\left<u_1,u_2\right>_p\left<v_1,v_2\right>_p-\left<u_1,v_2\right>_p\left<u_2,v_1\right>_p.
\]
The metric $G_p=\iota^{\ast}\left<\left<,\right>\right>_p$ on  ${\mathbb L}^{\pm}({\mathbb S}_p^{3})$ is Einstein \cite{An4}. 

\noindent It was shown in \cite{GG}, that the Hodge star operator $\ast$ on the space of bivectors $\Lambda^2({\mathbb R}_p^4)$ in ${\mathbb R}_p^4$, restricted to the space of oriented geodesics ${\mathbb L}^{\pm}({\mathbb S}^3_p)$ defines an almost paracomplex structure ${\mathbb J}^{\ast}$ that is parallel and isometric with respect to the Einstein metric $G_p$. In particular, for $x\wedge y\in {\mathbb L}^{\pm}({\mathbb S}_p^{3})$, the almost paracomplex structure is defined by
\[
{\mathbb J}^{\ast}_{x\wedge y}=\left. \ast \right|_{T_{x\wedge y}{\mathbb L}^{\pm}({\mathbb S}^3_p)}.
\]
The metric $G'_p:=G_p({\mathbb J}^{\ast} .,.)$, is of neutral signature, locally conformally flat and scalar flat in ${\mathbb L}^{\pm}({\mathbb S}_p^{3})$.

\noindent Let $\phi:S\rightarrow{\mathbb S}_p^3$ be a non-totally geodesic smooth surface and $(e_1,e_2)$ be the principal directions of $\phi$ with corresponding eigenvalues $\kappa_1$ and $\kappa_2$. Then,
\[
\Phi:S\times {\mathbb S}^1\rightarrow{\mathbb L}({\mathbb S}_p^3):(x,\theta)\mapsto \phi(x)\wedge (\cos\theta\, e_1(x)+\sin\theta\, e_2(x)),
\]
is the immersion of the tangential congruence $\Sigma=\Phi(S\times {\mathbb S}^1)$ in the space of oriented geodesics ${\mathbb L}({\mathbb S}^3_p)$. It can be shown that if $\phi$ is a totally geodesic immersion, the mapping $\Phi$ is not an immersion. Also, $\Sigma$ is a null hypersurface with respect to the locally conformally flat neutral metric $g_-$ \cite{GG1}.

\noindent The eigenvalues of the tangential hypersurface $\Sigma$ are $0,\lambda_+$ and $\lambda_-$, where
\[
\lambda_+=\kappa_1\cos^2\theta+\kappa_2\sin^2\theta\qquad\lambda_-=-\kappa_1\sin^2\theta-\kappa_2\cos^2\theta,
\]
and therefore the mean curvature is
\[
H=\textstyle{\frac{1}{3}}(\kappa_1-\kappa_2)\cos2\theta.
\]
This yields:
\begin{Prop}
If $S$ is a totally umbilic surface in the non-flat 3-dimensional real space form, then the corresponding tangential congruence $\Sigma$ is a null hypersurface in $({\mathbb L}({\mathbb S}^3_p),G'_p)$ and is minimal in $({\mathbb L}({\mathbb S}^3_p),G_p)$. 
\end{Prop}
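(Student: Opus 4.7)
The proof plan is essentially to unpack the two conclusions separately and observe that both fall out immediately from formulas already displayed just above the statement, so the work is more in explaining why the hypothesis ``totally umbilic'' engages those formulas than in any fresh calculation.

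First I would address the nullity claim. The paper has already noted (citing \cite{GG1}) that for any non-totally-geodesic surface $\phi:S\to {\mathbb S}_p^3$, the tangential congruence $\Sigma=\Phi(S\times {\mathbb S}^1)$ is a null hypersurface with respect to the neutral metric $g_-=G'_p$. So the first step is simply to verify that under our hypothesis $\Sigma$ is indeed a bona fide immersed hypersurface: in a non-flat space form ${\mathbb S}_p^3$ a totally umbilic surface is either totally geodesic or has $\kappa_1=\kappa_2\neq 0$, and we must restrict to the latter case since, as recalled just before the proposition, $\Phi$ fails to be an immersion when $\phi$ is totally geodesic. With this restriction, nullity of $\Sigma$ in $(\mathbb L(\mathbb S_p^3),G'_p)$ is immediate from the previously cited result.

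For the minimality claim in $(\mathbb L({\mathbb S}_p^3),G_p)$, I would use the explicit mean curvature formula derived just above:
\[
H=\tfrac{1}{3}(\kappa_1-\kappa_2)\cos 2\theta.
\]
Under the totally umbilic hypothesis we have $\kappa_1=\kappa_2$ at every point of $S$, so the factor $\kappa_1-\kappa_2$ vanishes identically on $S\times \mathbb S^1$, giving $H\equiv 0$. This is precisely minimality of $\Sigma$ in $(\mathbb L(\mathbb S_p^3),G_p)$.

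There is essentially no obstacle here; the proposition is a direct corollary of the eigenvalue/mean curvature computation stated just before it, together with the background identification of tangential congruences as null hypersurfaces. The only subtlety worth flagging in the write-up is the need to exclude the totally geodesic case from the umbilic class so that $\Phi$ is genuinely an immersion, which is why the statement refers to the non-flat space forms (where umbilic non-geodesic surfaces genuinely exist, e.g.\ small spheres in $\mathbb S^3$ or horospheres and geodesic spheres in $\mathbb H^3$).
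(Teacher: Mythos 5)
Your proposal is correct and follows exactly the paper's (implicit) argument: the paper derives $H=\tfrac{1}{3}(\kappa_1-\kappa_2)\cos 2\theta$ and the nullity from \cite{GG1} immediately before the proposition and simply writes ``This yields,'' so minimality under $\kappa_1=\kappa_2$ and the null property are read off just as you do. Your extra remark about excluding the totally geodesic case (since $\Phi$ fails to be an immersion there) is a reasonable clarification consistent with the paper's own comment.
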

\end{exmp}

\vspace{0.1in}

\begin{exmp}
Consider the Cartesian product of the 2-spheres ${\mathbb S}^2\times {\mathbb S}^2$ endowed with the product metric 
\[
g_+=g\oplus g,
\]
where $g$ is the round metric of ${\mathbb S}^2$. It is well known that $g_+$ is Einstein with scalar curvature $R=4$. 

\noindent Define the almost paracomplex structure $P$ on ${\mathbb S}^2\times {\mathbb S}^2$ by:
\[
P(u,v)=(u,-v),
\]
where $(u,v)\in T({\mathbb S}^2\times {\mathbb S}^2)$. Then, $P$ is $G^+$-parallel and isometric. For $t\in (-1,1)$, consider the homogeneous hypersurfaces:
\[
\Sigma_t=\{(x,y)\in {\mathbb S}^2\times {\mathbb S}^2\subset {\mathbb R}^3\times {\mathbb R}^3)\,|\, \left<x,y\right>=t\}.
\]
In fact, $\Sigma_t$ is a tube of radius $\cos^{-1}(t/\sqrt{2})$ over the diagonal surface $\Delta=\{(x,x)\in  {\mathbb S}^2\times {\mathbb S}^2\}$. It was shown in \cite{urbano}, that $\Sigma_t$ is null for every $t$ with respect to the neutral metric 
\[
g_-=g_+(P.,.)=g\oplus (-g)
\]
and the principal curvatures are
\[
\lambda_1=\textstyle{\frac{1}{\sqrt{2}}}\sqrt{\frac{1+t}{1-t}},\qquad \lambda_2=-\frac{1}{\sqrt{2}}\sqrt{\frac{1-t}{1+t}},\qquad \lambda_3=0.
\]
Thus, $\Sigma_t$ is a CMC null hypersurface for any $t\in (-1,1)$ and is minimal only when $t=0$ as the mean curvature $H$ is
\[
H=\textstyle{\frac{1}{3\sqrt{2}}}\left(\sqrt{\frac{1+t}{1-t}}-\sqrt{\frac{1-t}{1+t}}\right).
\]
\end{exmp}

\noindent Similarly, we have the following example.

\begin{exmp}
Consider the Cartesian product of the 2-spheres ${\mathbb H}^2\times {\mathbb H}^2$ endowed with the product metric 
\[
g_+=g\oplus g,
\]
where $g$ is the standard hyperbolic metric of ${\mathbb H}^2$. It is not hard for one to see that $g_+$ is Einstein with scalar curvature $R=-4$. As before, the almost paracomplex structure $P$ on ${\mathbb H}^2\times {\mathbb H}^2$ is given by:
\[
P(u,v)=(u,-v),
\]
where $(u,v)\in T({\mathbb H}^2\times {\mathbb H}^2)$. Again, $P$ is $g_+$-parallel and isometric and for $t\in (-1,1)$, consider the homogeneous hypersurfaces:
\[
\Sigma_t=\{(x,y)\in {\mathbb H}^2\times {\mathbb H}^2\subset {\mathbb R}^3\times {\mathbb R}^3)\,|\, \left<x,y\right>=t\}.
\]
In fact, $\Sigma_t$ is a tube of radius $\cosh^{-1}(t/\sqrt{2})$ over the diagonal surface $\Delta=\{(x,x)\in  {\mathbb H}^2\times {\mathbb H}^2\}$. It was shown in \cite{GMY} that $\Sigma_t$ is null for every $t$ with respect to the neutral metric 
\[
g_-=g_+(P.,.)=g\oplus (-g)
\]
and the principal curvatures are
\[
\lambda_1=\textstyle{\frac{1}{\sqrt{2}}}\sqrt{\frac{1+t}{1-t}},\qquad \lambda_2=\frac{1}{\sqrt{2}}\sqrt{\frac{1-t}{1+t}},\qquad \lambda_3=0.
\]
Thus, $\Sigma_t$ is a CMC, non-minimal null hypersurface for any $t\in (-1,1)$ with mean curvature:
\[
H=\textstyle{\frac{1}{3\sqrt{2}}}\left(\sqrt{\frac{1+t}{1-t}}+\sqrt{\frac{1-t}{1+t}}\right).
\]
\end{exmp}

\vspace{0.2in}

\subsection{Main results}

Consider the principal orthonormal frame $(e_1,e_2,e_3=PN)$ of the null hypersurface $\Sigma$ so that
\[
Ae_i=\lambda_i e_i.
\]
 It is easily shown  that there is an angle $\theta\in [0,2\pi)$ such that
\[
Pe_1=\cos\theta e_1+\sin\theta e_2\qquad Pe_2=\sin\theta e_1-\cos\theta e_2.
\]
We call the angle $\theta$ \emph{the principal angle} of the null hypersurface $\Sigma$.

\noindent We now have the following result for totally geodesic null hypersurfaces:

\begin{Thm}
Every totally geodesic null hypersurface is scalar flat. If $M$ admits a totally geodesic null hypersurface then $(M,g_+)$ is Ricci-flat. 
\end{Thm}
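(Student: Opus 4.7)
The plan is to derive both conclusions from the single identity $\overline{\mbox{Ric}}_+(PN, PN) = 0$. Combined with the Einstein condition, this forces $\bar R_+ = 0$, after which both Ricci-flatness and scalar-flatness drop out. To set up, I would use that the null condition gives $C_+ = 0$, so $PN \in T\Sigma$ by part (3) of the preceding proposition on null hypersurfaces; and that totally geodesic gives $A_+ \equiv 0$, hence $\overline\nabla_X N = 0$ for every $X \in T\Sigma$. Combined with $\overline\nabla P = 0$,
\[
\overline\nabla_X(PN) = P(\overline\nabla_X N) = 0, \qquad X \in T\Sigma,
\]
so $PN$ is parallel along $\Sigma$ in the ambient connection. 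Unwinding the definition of $R$, this immediately yields $R(X, Y)(PN) = 0$ for all $X, Y \in T\Sigma$.

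The core of the argument is to compute $\overline{\mbox{Ric}}_+(PN, PN)$ in an orthonormal frame $\{e_1, e_2, PN, N\}$ of $TM|_\Sigma$ (the vectors $e_1, e_2$ chosen tangent to $\Sigma$ and orthogonal to $PN$; note $|PN|^2 = g_+(N,N) = 1$ since $P$ is $g_+$-isometric). Three of the four trace contributions vanish --- two by the parallelism just established, and the $PN$-diagonal one by antisymmetry of $R$ --- leaving
\[
\overline{\mbox{Ric}}_+(PN, PN) = g_+\bigl(R(N, PN)(PN), N\bigr).
\]
Now I would apply the commutation $R(X, Y) \circ P = P \circ R(X, Y)$ (a consequence of $\overline\nabla P = 0$) together with the $g_+$-symmetry of $P$:
\[
g_+(R(N, PN)\,PN, N) = g_+(P R(N, PN)\,N, N) = g_+(R(N, PN)\,N, PN).
\]
By antisymmetry of $R$ in its last two slots, the right-hand side equals $-g_+(R(N, PN)\,PN, N)$, forcing the scalar to vanish. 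Thus $\overline{\mbox{Ric}}_+(PN, PN) = 0$, and the Einstein condition $\overline{\mbox{Ric}}_+ = (\bar R_+/4)\, g_+$ then gives $\bar R_+ = 0$, so $(M, g_+)$ is Ricci-flat. The scalar flatness of $\Sigma$ follows immediately from the Gauss-derived identity (\ref{e:scalar curvature}), $R_+ = \tfrac{1}{2}\bar R_+ + 9 H_+^2 - |\sigma_+|^2$, since all three terms on the right are now zero.

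The main obstacle I expect is simply spotting the short manipulation that produces $g_+(R(N, PN)\,PN, N) = 0$ --- the interplay between the parallelism of $P$, its $g_+$-symmetry, and the pairwise antisymmetry of the Riemann tensor is what makes the whole theorem essentially a one-line computation. Once that observation is in place, the rest is routine bookkeeping with identities already developed in Section~2.
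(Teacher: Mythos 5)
Your proposal is correct, and it takes a genuinely different route from the paper. The paper proves scalar flatness of $\Sigma$ first, by an explicit moving-frame computation: choosing the principal frame $(e_1,e_2,e_3=PN)$, using $\overline\nabla P=0$ to pin down the connection coefficients $\omega_{ij}^k$ in terms of the principal angle $\theta$, and arriving at the closed formula $R=2\lambda_1\lambda_2\cos 2\theta-2(\lambda_1^2+\lambda_2^2)\cos^2\theta$ (equation (\ref{e:scalar curvature2})), which vanishes when all $\lambda_i=0$; Ricci-flatness of the ambient space then follows from the Gauss equation and the Einstein condition. You invert the order: you first establish $\overline{\mbox{Ric}}_+(PN,PN)=0$ and hence $\bar R_+=0$, then read off $R_+=0$ from (\ref{e:scalar curvature}). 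Your key identity $g_+(R(N,PN)PN,N)=0$ is a correct general fact --- for a parallel, $g_+$-symmetric product structure the ``paraholomorphic'' sectional curvature of any plane $\mathrm{span}\{v,Pv\}$ vanishes, because $g_+(R(\cdot,\cdot)Pv,v)=g_+(R(\cdot,\cdot)v,Pv)$ clashes with the antisymmetry of the curvature tensor in its last two slots (note the sign works out only because $P$ is symmetric rather than antisymmetric, which is why no such conclusion holds in the K\"ahler case) --- and the remaining trace terms are correctly killed by the ambient-parallelism of $PN$ along a totally geodesic null hypersurface. Your argument is shorter, frame-free, and makes transparent exactly where the hypotheses enter; the paper's longer computation has the advantage that formula (\ref{e:scalar curvature2}) is reused as the engine of Theorems 2 and 3, so the effort is amortized.
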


\begin{proof}
Let $\{e_1,e_2.e_3\}$ be an orthonormal frame of $\Sigma$ such that 
\[
Ae_i=\lambda_i e_i,
\]
where $e_3=PN$ and therefore, $\lambda_3=0$. 
The almost paracomplex structure $P$ is :
\[
P=\begin{pmatrix}\cos\theta & \sin\theta & 0 & 0\\ \sin\theta & -\cos\theta & 0 & 0\\ 0 & 0 & 0 & 1\\ 0 & 0 &1 & 0  \end{pmatrix}.
\]
with respect to the orthonormal frame $(e_1,e_2,e_3,N)$.

\noindent Let $\overline\nabla,\nabla$ be the Levi-Civita connections for the metrics $g$ and the induced metric of $g$ on $\Sigma$, respectively. For $i,j=1,2,3$ we have
\[
\overline\nabla_{e_i}e_j=\nabla_{e_i}e_j+\lambda_i\delta_{ij}N,
\]
and if we let $\omega_{ij}^k=g(\nabla_{e_i}e_j,e_k)$ then
\[
\omega_{ij}^k=-\omega_{ik}^j.
\]
Defining
\begin{equation}\label{e:defofkmunu}
k=\omega_{11}^2,\qquad \mu=\omega_{21}^2,\qquad \nu=\omega_{31}^2.
\end{equation}
A brief calculation gives
\[
g(R(e_2,e_1)e_1,e_2)=-e_1(\mu)+e_2(k)+\lambda_1\lambda_2-k^2-\mu^2+\nu (\lambda_1-\lambda_2)\sin\theta.
\]
\[
g(R(e_3,e_1)e_1,e_3)=-\lambda_1\nu\sin\theta-\lambda_2\nu\sin\theta+e_3(\lambda_1\cos\theta)-\lambda_1^2\cos^2\theta-\lambda_1\lambda_2\sin^2\theta.
\]
\[
g(R(e_3,e_2)e_2,e_3)=\lambda_1\nu\sin\theta+\lambda_2\nu\sin\theta-e_3(\lambda_2\cos\theta)-\lambda_2^2\cos^2\theta-\lambda_1\lambda_2\sin^2\theta.
\]
Therefore, we deduce
\[
\mbox{Ric}(e_1,e_1)=-e_1(\mu)+e_2(k)+e_3(\lambda_1\cos\theta)+\lambda_1\lambda_2\cos^2\theta-\lambda^2_1\cos^2\theta-k^2-\mu^2-2\nu\lambda_2\sin\theta.
\]
\[
\mbox{Ric}(e_2,e_2)=-e_1(\mu)+e_2(k)-e_3(\lambda_2\cos\theta)+\lambda_1\lambda_2\cos^2\theta-\lambda^2_2\cos^2\theta-k^2-\mu^2+2\nu\lambda_1\sin\theta.
\]
\[
\mbox{Ric}(e_3,e_3)=-e_3[(\lambda_1-\lambda_2)\cos\theta]-2\lambda_1\lambda_2\sin^2\theta-(\lambda^2_1+\lambda^2_2)\cos^2\theta.
\]
The scalar curvature $R$ of $\Sigma$ is
\begin{eqnarray}
R&=&-2e_1(\mu)+2e_2(k)+2\lambda_1\lambda_2\cos 2\theta-2(\lambda^2_1+\lambda^2_2)\cos^2\theta\label{e:scalar curvature1} \\
&&\qquad\qquad\qquad\qquad\qquad\qquad -2k^2-2\mu^2+2\nu(\lambda_1-\lambda_2)\sin\theta.\nonumber 
\end{eqnarray}
Using the fact that $P$ is parallel, namely 
\[
P\overline\nabla_{e_i}e_j=\overline\nabla_{e_i}Pe_j,
\]
we have,
\begin{equation}\label{e:connection}
\omega_{12}^3=\lambda_1\sin\theta,\qquad \omega_{11}^3=\lambda_1\cos\theta, \qquad \omega_{12}^1=e_1(\theta/2),
\end{equation}
\[
\omega_{21}^3=\lambda_2\sin\theta,\qquad \omega_{22}^3=-\lambda_2\cos\theta, \qquad \omega_{22}^1=e_2(\theta/2),
\]
\[
\omega_{13}^1=-\lambda_1\cos\theta\qquad \omega_{13}^2=-\lambda_1\sin\theta\qquad \omega_{23}^1=-\lambda_2\sin\theta,
\]
\[
\omega_{31}^2=-e_3(\theta/2),\qquad \omega_{31}^3=\omega_{32}^3=0,
\]
and thus,
\[
\nabla_{e_1}e_1=-e_1(\theta/2)e_2+\lambda_1\cos\theta e_3
\qquad
\nabla_{e_1}e_2=e_1(\theta/2)e_1+\lambda_1\sin\theta e_3
\]
\[
\nabla_{e_1}e_3=-\lambda_1\cos\theta e_1-\lambda_1\sin\theta e_2
\qquad
\nabla_{e_2}e_1=-e_2(\theta/2)e_2+\lambda_2\sin\theta e_3
\]
\[
\nabla_{e_2}e_2=e_2(\theta/2)e_1-\lambda_2\cos\theta e_3
\qquad
\nabla_{e_2}e_3=-\lambda_2\sin\theta e_1+\lambda_2\cos\theta e_2
\]
\[
\nabla_{e_3}e_1=-e_3(\theta/2)e_2
\qquad
\nabla_{e_3}e_2=e_3(\theta/2)e_1
\qquad
\nabla_{e_3}e_3=0.
\]
The relations (\ref{e:defofkmunu}) and (\ref{e:connection}), yield
\[
\mu=-e_2(\theta/2),\qquad k=-e_1(\theta/2),
\]
and therefore,
\[
-e_1(\mu)+e_2(k)=[e_1,e_2](\theta/2).
\]
On the other hand,
\begin{eqnarray}
[e_1,e_2]&=&e_1(\theta/2)e_1+\lambda_1\sin\theta e_3-(-e_2(\theta/2)e_2+\lambda_2\sin\theta e_3)\nonumber\\
&=&e_1(\theta/2)e_1+e_2(\theta/2)e_2+(\lambda_1-\lambda_2)\sin\theta\,e_3.\nonumber
\end{eqnarray}
Thus,
\begin{eqnarray}
-e_1(\mu)+e_2(k)&=&[e_1,e_2](\theta/2)\nonumber\\
&=&e_1(\theta/2)e_1(\theta/2)+e_2(\theta/2)e_2(\theta/2)+(\lambda_1-\lambda_2)\sin\theta\,e_3(\theta/2)\nonumber\\
&=&k^2+\mu^2-\nu (\lambda_1-\lambda_2)\sin\theta\nonumber
\end{eqnarray}
The scalar curvature given in (\ref{e:scalar curvature1}) now becomes
\begin{equation}\label{e:scalar curvature2}
R=2\lambda_1\lambda_2\cos 2\theta-2(\lambda^2_1+\lambda^2_2)\cos^2\theta.
\end{equation}
Assuming that $\Sigma$ is totally geodesic we can see easily that $R=0$. In this case, the Gauss equation implies also that $(M,g)$ is scalar flat since
\[
\frac{\bar R}{2}=R-9H^2+|\sigma|^2=0.
\]
The Ricci flatness of $(M,g)$ follows from the fact $g$ is Einstein.   
\end{proof}

\vspace{0.1in}

\noindent If $\Sigma$ is a null hypersurface, the principal curvature corresponding to the principal direction $PN$ will be called \emph{trivial}. The following Theorem explores null hypersurfaces where the non-trivial eigenvalues are equal.

\begin{Thm}\label{t:main} 
Suppose $(M,g)$ has nonnegative scalar curvature and $\Sigma$ is a null hypersurface with equal non-trivial principal curvatures. Then, $g$ is Ricci-flat and $\Sigma$ is totally geodesic.
\end{Thm}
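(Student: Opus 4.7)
My plan is to exploit the scalar curvature formula for null hypersurfaces derived during the proof of Theorem 1. That formula,
\[
R = 2\lambda_1\lambda_2\cos 2\theta - 2(\lambda_1^2 + \lambda_2^2)\cos^2\theta,
\]
was obtained for an arbitrary null hypersurface (the totally geodesic hypothesis was only used at the end), so I can apply it here. Setting $\lambda_1 = \lambda_2 = \lambda$ and using $\cos 2\theta = 2\cos^2\theta - 1$, the right-hand side collapses to $R = -2\lambda^2$.

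Next I would bring in the Gauss equation for $(M,g_+)$ established in Section~2, namely
\[
R = \tfrac{1}{2}\bar R + 9H^2 - |\sigma|^2.
\]
Because $\Sigma$ is null, the trivial principal curvature $\lambda_3$ vanishes, so $H = \tfrac{2\lambda}{3}$ and $|\sigma|^2 = 2\lambda^2$. Substituting gives $-2\lambda^2 = \tfrac{1}{2}\bar R + 4\lambda^2 - 2\lambda^2$, i.e.\ $\bar R = -8\lambda^2 \le 0$.

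Now the hypothesis $\bar R \ge 0$ immediately forces $\bar R = 0$ and $\lambda = 0$. Hence the two non-trivial principal curvatures are zero, and since the trivial one already vanishes, $\Sigma$ is totally geodesic. Moreover, $g$ is Einstein with vanishing scalar curvature, so it is Ricci-flat, completing the argument.

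There is no real obstacle: the result is essentially a direct specialization of the scalar-curvature identity together with Gauss, and the main thing to check is just that the formula for $R$ derived in the proof of Theorem 1 is valid before the totally geodesic hypothesis is imposed. The sign of the coefficient $-8\lambda^2$ is the key point that makes the nonnegativity assumption on $\bar R$ do all the work.
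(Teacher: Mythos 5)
Your proof is correct and follows essentially the same route as the paper: both combine the intrinsic scalar curvature formula $R=2\lambda_1\lambda_2\cos 2\theta-2(\lambda_1^2+\lambda_2^2)\cos^2\theta$ (valid for any null hypersurface, as you rightly note) with the Gauss equation $R=\tfrac{1}{2}\bar R+9H^2-|\sigma|^2$ to force $\bar R=-8\lambda^2\le 0$, whence nonnegativity gives $\lambda=0$ and Ricci-flatness. The paper merely packages the same computation as the identity $\tfrac{\bar R}{2}+(\lambda_1+\lambda_2)^2=-(\lambda_1-\lambda_2)^2\cos 2\theta$ before specializing to $\lambda_1=\lambda_2$.
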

\begin{proof}
Using the scalar curvature $R$ in (\ref{e:scalar curvature2}), the Gauss equation for $\Sigma$ becomes
\[
\frac{\bar R}{2}+(\lambda_1+\lambda_2)^2=-(\lambda_1-\lambda_2)^2\cos2\theta.
\]
Since $\lambda_1=\lambda_2$, we have 
\[
\frac{\bar R}{2}+(\lambda_1+\lambda_2)^2=0,\]
impliying $\bar R=0$ and $\lambda_1+\lambda_2=0$. This means that $\lambda_1=\lambda_2=0$ and thus, $\Sigma$ is totally null.
\end{proof}

\vspace{0.1in}

\noindent We now have the following theorem about CMC null hypersurfaces:

\begin{Thm}\label{t:main} 
Let $\Sigma$ be a CMC, non-minimal null hypersurface in $(M,g)$. Then, all principal curvatures and the scalar curvature of $\Sigma$ are constant. Furthermore, the scalar curvature of $g$ is given by
\begin{equation}\label{e:cmc}
\bar R=-8\lambda_1\lambda_2,
\end{equation}
where $\lambda_1,\lambda_2$, denote the non-trivial principal curvatures of $\Sigma$.
\end{Thm}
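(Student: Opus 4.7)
My plan is to exploit the fact that along a null hypersurface the function $C_+$ vanishes identically, so $\Delta C_+ = 0$, and to feed this into the Laplacian formula in Proposition 2. Together with the CMC hypothesis $\nabla H_+ = 0$, this kills two of the three terms and reduces the formula to the single algebraic identity $\operatorname{Tr}(P^T A_+^2) = 0$, which will be the engine of the whole argument.

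The next step is to evaluate that trace in the principal orthonormal frame $(e_1, e_2, e_3 = PN)$ introduced at the beginning of Section~3.2, where $Ae_i = \lambda_i e_i$ with $\lambda_3 = 0$ and $Pe_1 = \cos\theta\, e_1 + \sin\theta\, e_2$, $Pe_2 = \sin\theta\, e_1 - \cos\theta\, e_2$. Since $Pe_3 = P^2 N = N$ is normal, the tangential projection satisfies $P^T e_3 = 0$, so the trace collapses to $(\lambda_1^2 - \lambda_2^2)\cos\theta$. Equating this to zero and invoking non-minimality, i.e.\ $\lambda_1 + \lambda_2 = 3H \neq 0$, yields the pointwise dichotomy
\[
(\lambda_1 - \lambda_2)\cos\theta = 0.
\]

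Now I combine the Gauss equation (\ref{e:scalar curvature}) with the intrinsic scalar curvature formula (\ref{e:scalar curvature2}) that was already derived in the proof of Theorem~1. Since $\lambda_3 = 0$, one has $9H^2 - |\sigma|^2 = 2\lambda_1\lambda_2$, so $\bar R/2 = R - 2\lambda_1\lambda_2$; substituting (\ref{e:scalar curvature2}) and using $\cos 2\theta = 2\cos^2\theta - 1$ gives, after a short simplification,
\[
\bar R = -4(\lambda_1 - \lambda_2)^2 \cos^2\theta - 8\lambda_1\lambda_2.
\]
The dichotomy from the previous paragraph forces $(\lambda_1-\lambda_2)^2\cos^2\theta = 0$ at every point, which at once produces $\bar R = -8\lambda_1\lambda_2$.

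For the constancy assertions I use that $(M, g_+)$ is a 4-dimensional Einstein manifold, so the contracted Bianchi identity makes $\bar R$ a (global) constant. Hence $\lambda_1 \lambda_2 = -\bar R / 8$ is constant along $\Sigma$; combined with $\lambda_1 + \lambda_2 = 3H$ constant, the two non-trivial principal curvatures are the roots of a quadratic with constant coefficients and are therefore individually constant. Finally $R = \bar R/2 + 2\lambda_1\lambda_2$ is constant as well. There is no serious technical obstacle here; the only delicate point is the second step, where the cleanest route is not to case-split on $\cos\theta = 0$ versus $\lambda_1 = \lambda_2$, but to notice that the quantity $(\lambda_1 - \lambda_2)^2\cos^2\theta$ appearing in $\bar R$ is exactly the square of what the Laplacian computation annihilates, so both branches collapse to the stated formula simultaneously.
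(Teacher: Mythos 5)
Your proposal is correct and follows essentially the same route as the paper: both use Proposition 2 with $C\equiv 0$ and $\nabla H=0$ to extract $\operatorname{Tr}(P^{T}A^{2})=(\lambda_1^2-\lambda_2^2)\cos\theta=0$, then combine the Gauss equation with the intrinsic scalar curvature formula from the proof of Theorem 1. The only difference is cosmetic: where the paper splits into the cases $\lambda_1=\lambda_2$ and $\cos\theta=0$, you observe that $\bar R=-4(\lambda_1-\lambda_2)^2\cos^2\theta-8\lambda_1\lambda_2$ so that both branches collapse at once --- a slightly cleaner packaging of the identical argument.
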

\begin{proof}
We recall the principal orthonormal frame $\{e_1,e_2.e_3=PN\}$ of the null hypersurface $\Sigma$. The Laplacian of the function $C$ with respect to the induced metric is
\[
\Delta C=-6\,g(X,\nabla H)-2C|\sigma|^2+2\mbox{Tr}(P^TA^2).
\]
Since $C=0$ and $\nabla H=0$, we have
\[
\mbox{Tr}(P^TA^2)=0,
\]
which ensures
\[
\sum_{i=1}^3g(PA^2e_i,e_i)=0.
\]
It follows
\[
\sum_{i=1}^2\lambda^2_ig(Pe_i,e_i)=0,
\]
and therefore,
\[
(\lambda^2_1-\lambda^2_2)\cos\theta=0.
\]
Note that $\Sigma$ is non-minimal and therefore, $\lambda_1+\lambda_2\neq 0$. 

\noindent If $\lambda_1=\lambda_2$, we have that $H=\textstyle{\frac{2}{3}}\lambda_1$ is constant and considering the scalar curvature in (\ref{e:scalar curvature2}), we find
\begin{eqnarray}
\textstyle{\frac{1}{2}}R&=&\lambda^2_1\cos 2\theta-2\lambda^2_1\cos^2\theta\nonumber\\
&=&-\lambda^2_1.\nonumber
\end{eqnarray}
Using the Gauss equation (\ref{e:scalar curvature}), we obtain
\begin{eqnarray}
-2\lambda^2_1&=& R\nonumber \\
&=&\textstyle{\frac{1}{2}}\bar R+9H^2-|\sigma|^2\nonumber \\
&=&\textstyle{\frac{1}{2}}\bar R+(2\lambda_1)^2-2\lambda^2_1,\nonumber 
\end{eqnarray}
which implies that $\bar R=-8\lambda^2_1$. 

\noindent If $\cos\theta=0$, then either $\theta=\pi/2$ or $\theta=3\pi/2$.
The scalar curvature of $\Sigma$ given (\ref{e:scalar curvature2}) becomes
\[
R=-2\lambda_1\lambda_2.
\]
On the other hand, the scalar curvature in (\ref{e:scalar curvature}) yields
\begin{eqnarray}
-2\lambda_1\lambda_2&=& R\nonumber \\
&=&\textstyle{\frac{1}{2}}\bar R+9H^2-|\sigma|^2\nonumber \\
&=&\textstyle{\frac{1}{2}}\bar R+(\lambda_1+\lambda_2)^2-\lambda^2_1-\lambda^2_2,\nonumber 
\end{eqnarray}
and therefore, $\bar R=-8\lambda_1\lambda_2$. Note that $\bar R$ is constant and as such $\lambda_1\lambda_2$ is constant. However $\lambda_1+\lambda_2$ is also constant and thus both $\lambda_1$ and $\lambda_2$ are constant.

\noindent All principal curvatures are constant and therefore the Gauss equation, given in (\ref{e:scalar curvature}), tells us that the scalar curvature $R$ must also be constant. 

\end{proof}

\vspace{0.2in}

\noindent Theorem \ref{t:main} can no longer be extended to minimal null hypersurfaces since the relation (\ref{e:cmc}) does not necessarily hold. To see this, consider the minimal, null hypersurfaces $M_{a,b}\subset {\mathbb S}^2\times {\mathbb S}^2$, for $a,b\in {\mathbb S}^2\subset {\mathbb R}^3$:
\[
M_{a,b}=\{(x,y)\in  {\mathbb S}^2\times  {\mathbb S}^2\,|\, \left<x,a\right>+\left<y,b\right>=0\}.
\]
In \cite{urbano} F. Urbano showed that the principal curvatures are non constant and in particular, if $(x,y)\in M_{a,b}$ then: 
\[
\textstyle{\lambda_1(x,y)}=\textstyle{\frac{\left<x,a\right>}{\sqrt{2(1-\left<x,a\right>^2)}}},\qquad \lambda_2(x,y)=-\frac{\left<x,a\right>}{\sqrt{2(1-\left<x,a\right>^2)}},\qquad \lambda_3(x,y)=0.
\]
As such 
\[
-8\lambda_1\lambda_2=\textstyle{\frac{4\left<x,a\right>^2}{1-\left<x,a\right>^2}}\neq 4=\bar R.
\]

\end{document}